\newtheorem{theorem}{Theorem}[section]
\newtheorem{lemma}[theorem]{Lemma}
\newtheorem{proposition}[theorem]{Proposition}
\theoremstyle{definition}
\theoremstyle{remark}
\newtheorem{remark}[theorem]{Remark}
\numberwithin{equation}{section}
\newcommand\norm[1]{\lVert#1\rVert}
\begin{document}

\title[Comparable Almost Periodic Reaction-Diffusion
Systems] {Convergence in Comparable Almost Periodic
Reaction-Diffusion Systems with Dirichlet Boundary Condition}

\author{Feng Cao}
\address{Department of Mathematics, Nanjing University of Aeronautics and Astronautics,
Nanjing, Jiangsu 210016, P. R. China}
\email{fcao@nuaa.edu.cn}
\thanks{The first author is supported by NSF of China No. 11201226, the New Teachers' Fund for Doctor Stations No. 20123218120032,
and the Fundamental Research Funds for the Central Universities NO.
NS2012001.}

\author{Yelai Fu}
\address{Department of Mathematics, Nanjing University of Aeronautics and Astronautics,
Nanjing, Jiangsu 210016, P. R. China}
 \email{fuyelai@126.com}


\subjclass[2000]{37B55, 37L15, 35B15, 35K57}



\keywords{Reaction-diffusion systems, Asymptotic behavior,
Skew-product semiflows, Uniform stability}

\begin{abstract}
The paper is to study the asymptotic dynamics in nonmonotone
comparable almost periodic reaction-diffusion system with Dirichlet
boundary condition, which is comparable with uniformly stable
strongly order-preserving system. By appealing to the theory of
skew-product semiflows, we obtain the asymptotic almost periodicity
of uniformly stable solutions to the comparable reaction-diffusion
system.
\end{abstract}

\maketitle



\section{Introduction}
In the last 50 years or so, many of the concepts of dynamical
systems have been applied to the study of partial different
equations (see \cite{chen,chen2,chen3,chow,chow2,Hale,Hen,ShenYi,S},
etc.). In this paper, we shall study the long-term behaviour of the
solutions of some non-autonomous comparable reaction-diffusion
equations.

We consider the almost periodic reaction-diffusion system with
Dirichlet boundary condition:

\begin{equation}\label{1.1}
\left\{\begin{array}{ll} \dfrac{\partial v_i}{\partial t}=
d_i(t)\Delta v_i +F_i(t,v_1,\cdots,v_n),\qquad x\in \Omega,
\, t>0,\\
v_i(t,x)=\,0, \qquad \qquad\qquad \qquad \qquad x\in \partial\Omega,
\, t>0,
\\
v_i(0,x)=v_{0,i}(x), \qquad \qquad\qquad \qquad x\in \bar{\Omega},\,
1\leqq i\leqq n,
\end{array}
 \right.
\end{equation}
where $\Omega$ is a bounded domain in $\mathbb{R}^n$ with smooth
boundary. $d=(d_1(\cdot),\cdots,d_n(\cdot))\in C(\mathbb {R},\mathbb
{R}^n)$ is assumed to be an almost periodic vector-valued function
bounded below by a positive real vector. The nonlinearity
 $F=(F_1,\cdots,F_n): \mathbb{R}\times
\mathbb{R}^n\to \mathbb{R}^n$ is $C^1$-admissible and uniformly
almost periodic in $t$, and $F$ points into $\mathbb {R}_+^n$ along
the boundary of $\mathbb {R}_+^n$: $F_i(t,v)\geqq 0 \textnormal{
whenever }
 v\in \mathbb {R}_+^n \textnormal{ with
} v_i=0 \textnormal{ and } t\in \mathbb{R}^+$. However, {\it $F$ has
no monotonicity properties}.

In order to study properties of the solutions of such a non-monotone
equation, an effective approach is to exhibit and utilize certain
comparison techniques (see \cite{ConSm,Bro1,Bro2,Sm}). As pointed
out in \cite[Section 4]{Smi3}, the comparison technique involves
monotone systems in a natural way: the original non-monotone systems
are comparable with certain monotone ones. Thus, we assume that
there exists a function $f:\mathbb{R}\times \mathbb{R}^n_+\to
\mathbb{R}^n$ with $f(t,v)\geqq F(t,v)$ (or $f(t,v)\leqq F(t,v)$),
$\forall(t,v)\in \mathbb{R}\times \mathbb{R}_+^n$. Also, we assume
that $f$ satisfies (H1)-(H4) in section 2. Then we get a strongly
order-preserving system (see section 2 for details):
\begin{equation}\label{1.2}
\left\{\begin{array}{ll} \dfrac{\partial u_i}{\partial t}=
d_i(t)\Delta u_i +f_i(t,u_1,\cdots,u_n),\qquad x\in \Omega,
\, t>0,\\
u_i(t,x)=\,0, \qquad \qquad\qquad \qquad\qquad x\in \partial\Omega,
\, t>0,
\\
u_i(0,x)=u_{0,i}(x), \qquad \qquad \qquad\qquad x\in \bar{\Omega},\,
1\leqq i\leqq n.
 \end{array}
 \right.
\end{equation}

We want to know whether such a non-monotone system (\ref{1.1})
inherits certain asymptotic behaviour from its strongly
order-preserving partner (\ref{1.2}). Note that a unified framework
to study nonautonomous equations is based on the so-called
skew-product semiflows  (see \cite{Sell,ShenYi}). Since even the
strongly monotone (which is a stronger notion than strongly
order-preserving) skew-product semiflows can possess very
complicated chaotic attractors (see \cite{ShenYi}), we hence assume
that the strongly order-preserving partner is `uniformly stable',
and to establish the asymptotic 1-cover property of the
corresponding  strongly order-preserving skew-product semiflow.

As far as we know, there are only a few works on the related topics.
Jiang \cite{Jiang3} proved the global convergence of the comparable
discrete-time or continuous-time system provided that all the
equilibria of its monotone partner form a totally ordered curve.
Recently, Cao, Gyllenberg and Wang\cite{CGW} established the
asymptotic 1-cover property of the comparable skew-product
semiflows, whose partner systems are eventually strongly monotone
and uniformly stable. Here we emphasize that for reaction-diffusion
system with Dirichlet boundary condition, the cone $X_+$ has empty
interior in the state space $X=\Pi_1^n C_0(\bar{\Omega})$ (see
section 2 for details). Thus, the skew-product semiflow  generated
by its partner is only strongly order-preserving, but not eventually
strongly monotone (see \cite[Chapter 6]{HirSmi}). So we have to find
another way to get the corresponding asymptotic dynamics for
Dirichlet problem.

Motivated by \cite{JZH}, in order to get the asymptotic behavior of
solutions to comparable almost periodic  reaction-diffusion system
(\ref{1.1}), we first prove that every precompact trajectory of the
strongly order-preserving system (\ref{1.2}) is asymptotic to a
1-cover of the base flow (see Proposition \ref{theorem 3.3}). Based
on this, for the uniformly stable and strongly order-preserving
skew-product semiflow generated by (\ref{1.2}), we can get the
topological structure of the set of the union of all 1-covers
similarly as \cite{CGW} (see Lemma \ref{lemma 4.1}). With such
tools, we are able to establish the 1-covering property of uniformly
stable omega-limit sets of comparable skew-product semiflow (see
Proposition \ref{prop 4.3}), and thus obtain the asymptotic almost
periodicity of uniformly stable solutions to system (\ref{1.1}).

The paper is organized as follows. In section 2, we present some
basic definitions and our main result. In Section 3 we prove the
main result.

\section{Definitions and the main result}

A subset $S$ of $\mathbb{R}$ is said to be {\it relatively dense} if
there exists $l>0$ such that every interval of length $l$ intersects
$S$. A function $f$, defined and continuous on $\mathbb{R}$, is {\it
almost periodic} if, for any $\varepsilon>0$, the set
$T(f,\varepsilon)=\{s\in
\mathbb{R}:|f(t+s)-f(t)|<\varepsilon,\,\forall t\in \mathbb{R}\}$ is
relatively dense. A continuous function $f : \mathbb{R}\times
\mathbb{R}^m \mapsto \mathbb{R}^n$ is said to be {\it admissible}
if, for every compact subset $K \subset \mathbb{R}^m$, $f$ is
bounded and uniformly continuous on $\mathbb{R}\times K$. Besides,
if $f$ is of class $C^r (r \geqq 1)$ in $x \in \mathbb{R}^m$, and
$f$ and all its partial derivatives with respect to $x$ up to order
$r$ are admissible, then we say that $f$ is $C^r$-{\it admissible}.
A function $f \in C(\mathbb{R}\times \mathbb{R}^m,\mathbb{R}^n)$ is
{\it uniformly almost periodic in} $t$, if $f$ is both admissible
and almost periodic in $t\in \mathbb{R}$.

Let $f \in C(\mathbb{R}\times \mathbb{R}^m,\mathbb{R}^n)$ be
uniformly almost periodic, one can define the Fourier series of $f$
(see \cite{ShenYi,Ve}), and the {\it frequency module}
$\mathcal{M}(f)$ of $f$ as the smallest Abelian group containing a
Fourier spectrum. Let $f,g\in C(\mathbb{R}\times
\mathbb{R}^m,\mathbb{R}^n)$ be two uniformly almost periodic
functions in $t$. One has $\mathcal{M}(f)=\mathcal{M}(g)$ if and
only if the flow $(H(g),\mathbb{R})$ is isomorphic to the flow
$(H(f),\mathbb{R})$ (see, \cite{Fi} or \cite[Section
1.3.4]{ShenYi}). Here $H(f)={\rm cl}\{f\cdot\tau:\tau\in
\mathbb{R}\}$ is called the {\it hull of $f$}, where
$f\cdot\tau(t,\cdot)=f(t+\tau,\cdot)$ and the closure is taken under
the compact open topology.

Let $(Y,d_Y)$ be a compact metric space with metric $d_Y$. A
\emph{continuous flow} $\sigma: \mathbb{R}\times Y \rightarrow Y$,
$(t,y) \rightarrow \sigma{(t,y)}=\sigma_t(y)=y\cdot t$ is called
\emph{minimal} if $Y$ has no other nonempty compact invariant subset
but itself. Here  a subset $Y_1 \subset Y$ is \emph{invariant} if
$\sigma_{t}(Y_1) = Y_1$ for every $t \in \mathbb{R}$.

Consider the almost periodic reaction-diffusion system with
Dirichlet boundary condition:
\begin{equation}\label{IBVP-sys}
\left\{\begin{array}{ll} \dfrac{\partial v_i}{\partial t}=
d_i(t)\Delta v_i +F_i(t,v_1,\cdots,v_n),\qquad x\in \Omega,
\, t>0,\\
v_i(t,x)=\,0, \qquad \qquad\qquad \qquad \qquad x\in \partial\Omega,
\, t>0,
\\
v_i(0,x)=v_{0,i}(x), \qquad \qquad\qquad \qquad x\in \bar{\Omega},\,
1\leqq i\leqq n,
 \end{array}
 \right.
\end{equation}
where $\Omega$ is a bounded domain in $\mathbb{R}^n$ with smooth
boundary. $\Delta$ is the Laplacian operator on $\mathbb{R}^n$.

Let $d=(d_1(\cdot),\cdots,d_n(\cdot))\in C(\mathbb{R},\mathbb{R}^n)$
be an almost periodic vector-valued function and for some $d_0 > 0$,
$d_i(t) \geqq d_0$, $\forall t \in \mathbb{R}$, $1 \leqq i \leqq n$.
The nonlinearity $F=(F_1,\cdots,F_n): \mathbb{R}\times
 \mathbb{R}^n\to \mathbb{R}^n$ is
$C^1$-admissible and uniformly almost periodic in $t$. Let
$v=(v_1,\cdots,v_n)$, we also assume that
$$\textnormal{ (I) } \qquad F_i(t,v)\geqq 0 \textnormal{ whenever }
 v\in \mathbb{R}_+^n \textnormal{ with
} v_i=0 \textnormal{ and }  t\in \mathbb{R}^+.\qquad$$

Denote $X=\Pi_1^n C_0(\bar{\Omega})$ ($C_0(\bar{\Omega}):=\{\phi\in
C(\bar{\Omega},\mathbb{R}):\phi|_{\partial\Omega}=0\}$) and the
standard cone $X_+=\{u\in X:u(x)\in
\mathbb{R}_+^n,x\in\bar{\Omega}\}$. Then the cone $X_+$ induces an
\emph{ordering} on $X$ via $x_{1}\leqq x_{2}$ if $x_{2}- x_{1}\in
X_+$. We write $x_{1}< x_{2}$ if $x_{2}- x_{1}\in X_+\setminus
\{0\}$.  Let $x\in X$ and a subset $U\subset X$. We write $x<_r U$
if $x<_r u$ for all $u\in U$. Given two subsets $A$, $B\subset X$,
we write $A<_r B$ if $a<_r b$ holds for each choice of $a\in A$,
$b\in B$. Here $<_r$ represents $\leqq$ or $<$. $x>_r U$ is
similarly defined. Obviously, every compact subset in $X$ has both a
greatest lower bound and a least upper bound.

Let $H(d,F)$ be the hull of the function $(d,F)$. Then the time
translation $(\mu,G)\cdot t$ of $(\mu,G)\in H(d,F)$ induces a
 compact and minimal flow on $H(d,F)$ (see \cite{Sell} or \cite{ShenYi}).
By the standard theory of reaction-diffusion systems (see
\cite[Chapter 6]{HirSmi}), it follows that for every $v_0\in X_+$
and $(\mu,G)\in H(d,F)$, the system
\begin{equation}\label{IBVP-sys-g}
\left\{\begin{array}{ll} \dfrac{\partial v_i}{\partial t}=
\mu_i(t)\Delta v_i +G_i(t,v),\qquad x\in \Omega,
\, t>0,\\
v_i(t,x)=\,0, \qquad \qquad\qquad \qquad x\in \partial\Omega, \,
t>0,
\\
v(0,x)=v_{0}(x), \qquad \qquad \qquad x\in \bar{\Omega},\, 1\leqq
i\leqq n
 \end{array}
 \right.
\end{equation}
admits a (locally) unique regular solution $v(t,\cdot,v_0;\mu,G)$ in
$X_+$. This solution also continuously depends on $(\mu,G)\in
H(d,F)$ and $v_0\in X_+$ (see \cite{Hen}). Thus, \eqref{IBVP-sys-g}
induces a (local) skew-product semiflow $\Gamma$ on $X_+\times
H(d,F)$ with
$$\Gamma_t(v_0,(\mu,G))=(v(t,\cdot,v_0;\mu,G),(\mu,G)\cdot t),\quad \forall\,
(v_0,(\mu,G))\in X_+\times H(d,F), \,t\geqq 0.$$

Now we assume that there exists a function $f\in
C^1(\mathbb{R}\times\mathbb{R}^n_+,\mathbb{R}^n)$, which is
$C^1$-admissible and uniformly almost periodic in $t$, satisfying

\begin{enumerate}
\item [ (H1)]$$ f(t,v)\geqq F(t,v) \qquad \textnormal{ for all } (t,v)\in
\mathbb{R}\times \mathbb{R}_+^n. $$ with its frequency module
$\mathcal{M}(f)=\mathcal{M}(F)$ (thus $H(d,f)\cong H(d,F)$);

\item [ (H2)]  $f_i(t,0)=0 \,(1\leqq i\leqq n)$;

\item [ (H3)] $\frac{\partial f_i}{\partial x_j}(t,x)\geqq 0$ for all
$1\leqq i\neq j\leqq n$, and there is a $\delta>0$ such that if two
nonempty subsets $I,J$ of $\{1,2,\cdots,n\}$ form a partition of
$\{1,2,\cdots,n\}$, then for any $(t,x)\in
\mathbb{R}\times\mathbb{R}^n_+$, there exist $i\in I$, $j\in J$ such
that $|\dfrac{\partial f_i}{\partial x_j}(t,x)|\geqq \delta>0$;

\item [ (H4)] Every nonnegative solution of ordinary differential
system $\dot{u} = g(t,u), g \in H(f)$, is bounded.

\end{enumerate}

It is easy to see that, for any $(\mu,G)\in H(d,F)$, there exists a
$(\mu,g)\in H(d,f)$ such that
$$g(t,v)\geqq G(t,v) \textnormal{ for all } (t,v)\in
\mathbb{R}\times \mathbb{R}_+^n.$$ Denote $Y=H(d,f)$. Then we can
consider the following new reaction-diffusion system:
\begin{equation}\label{IBVP-sys-g-1}
\left\{\begin{array}{ll} \dfrac{\partial u_i}{\partial t}=
\mu_i(t)\Delta u_i +g_i(t,u),\qquad x\in \Omega,
\, t>0,\\
u_i(t,x)=\,0, \qquad \qquad\qquad \qquad x\in \partial\Omega, \,
t>0,
\\
u(0,x)=u_{0}(x)\in X_+,  \qquad \qquad x\in \bar{\Omega},\, 1\leqq
i\leqq n,
 \end{array}
 \right.
\end{equation}
which induces the following global skew-product semiflow:
\begin{equation}\label{equ6} \Pi_t:X_+\times Y\rightarrow X_+\times Y;
~(u_0,y=(\mu,g))\mapsto (u(t,\cdot,u_0,y),y\cdot t),~ t\in
\mathbb{R}^+,\end{equation} where $u(t,\cdot,u_0,y)$ is the unique
regular global solution of (\ref{IBVP-sys-g-1}) in $X_+$. Without
any confusion, we also write $u(t,\cdot,u_0,y)$ as $u(t,u_0,y)$.

Clearly, by the comparison principle and (H4),
the forward orbit $O^+(x,y)= \left\{\Pi_{t}(x,y) : t\geqq 0
\right\}$ of any $(x,y)\in X_+\times Y$ is precompact. Thus the
omega-limit set of $(x,y)$, defined by
$\omega(x,y)=\{(\hat{x},\hat{y}) \in X_+\times Y :
\Pi_{t_{n}}(x,y)\rightarrow (\hat{x},\hat{y}) (n\rightarrow\infty)
\text{\ for some sequence\ } t_{n}\rightarrow \infty \}$, is a
nonempty, compact and invariant subset in $X_+\times Y$. A forward
orbit $O^+(x_0,y_0)$ of $\Pi_t$ is said to be {\it uniformly stable}
if for every $\varepsilon>0$ there is a
$\delta=\delta(\varepsilon)>0$, called the {\it modulus of uniform
stability},  such that for every $x\in X_+$, if $s\geqq 0$ and
$\norm{u(s,x_0,y_0)-u(s,x,y_0)}\leqq \delta(\varepsilon)$ then
$$\norm{u(t+s,x_0,y_0)-u(t+s,x,y_0)}<\varepsilon \textnormal{ for
each }t\geqq 0.$$ Here we assume that every forward orbit of $\Pi_t$
in (\ref{equ6}) is uniformly stable, which can be guaranteed by the
existence of invariant functional.

Let $P:X_+\times Y \to Y$ be the natural projection. A compact
positively invariant set $K\subset X_+\times Y$ is called a {\it
$1$-cover} of $Y$ if $P^{-1}(y)\cap K$ contains a unique element for
every $y\in Y$. If we write the 1-cover $K=\{(c(y),y):y\in Y\}$,
then $c:Y\rightarrow X$ is continuous with $\Pi_t(c(y),y)=(c(y\cdot
t),y\cdot t)$, $\forall t\geqq0$. For the sake of brevity, we
hereafter also write $c(\cdot)$ as a {\it $1$-cover} of $Y$.

For skew-product semiflows, we always use the order relation on each
fiber $P^{-1}(y)$, and write $(x_1,y)\leqq (<)\, (x_2,y)$ if
$x_1\leqq x_2$ ($x_1<x_2$). Recall that the skew-product semiflow
$\Pi_t$ is called {\it monotone} if
$$\Pi_{t}(x_1,y)\leqq \Pi_{t}(x_2,y)$$ whenever $(x_1,y)\leqq (x_2,y)$
and $t\geqq0$. Moreover, $\Pi_t$ is {\it strongly order-preserving}
if it is monotone and there is a $t_0>0$ such that, whenever
$(x_1,y)<(x_2,y)$ there exist open subsets $U$, $V$ of $X_+$ with
$x_1\in U$,  $x_2\in V$ satisfying
$$\Pi_{t}(U,y) < \Pi_{t}(V,y) \quad \text{for all }
t\geqq t_0.$$ $\Pi_t$ is called {\it fiber-compact} if there exists
a $\bar{t}>0$ such that, for any $y\in Y$ and bounded subset
$B\subset X$, $\Pi_t(B,y)$ has compact closure in $P^{-1}(y\cdot t)$
for every $t>\bar{t}$. Then according to (H3), \cite[Chapter
6]{HirSmi} and \cite[Section 6]{JZH}, one can obtain that $\Pi_t$ in
(\ref{equ6}) is strongly order-preserving and fibre-compact.

By (H1), similarly as the proof of Lemma 5.2 in \cite{CGW}, we can
get that $\Gamma_t$ is upper-comparable with respect to $\Pi_t$ in
the sense that if $\Gamma_t(x_1,y)\leqq\Pi_t(x_2,y)$ whenever
$(x_1,y),(x_2,y)\in X_+\times Y$ with $(x_1,y)\leqq(x_2,y)$.

Now we are in a position to state our main result.

\begin{theorem}\label{theorem 2.1}
Any uniformly stable $L^\infty$-bounded solution of \eqref{IBVP-sys}
is asymptotic to an almost periodic solution.
\end{theorem}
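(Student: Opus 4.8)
The plan is to derive Theorem \ref{theorem 2.1} from the $1$-covering property of uniformly stable omega-limit sets of the comparable skew-product semiflow $\Gamma_t$, so the first thing I would do is reformulate the hypothesis in the language of $\Gamma_t$. Let $v(t,x)$ be an $L^\infty$-bounded, uniformly stable solution of \eqref{IBVP-sys} and set $v_0=v(0,\cdot)\in X_+$; then $v(t,x)=v(t,\cdot,v_0;(d,F))(x)$ and the forward orbit in skew-product form is $O^+(v_0,(d,F))=\{\Gamma_t(v_0,(d,F)):t\geqq 0\}$. Since $\Omega$ is bounded with smooth boundary and $F$ is $C^1$-admissible, the $L^\infty$-bound together with interior and up-to-the-boundary parabolic regularity estimates (see \cite{Hen}) upgrades to precompactness of $\{v(t,\cdot,v_0;(d,F)):t\geqq 1\}$ in $X=\Pi_1^n C_0(\bar\Omega)$; hence $O^+(v_0,(d,F))$ is precompact and $\omega(v_0,(d,F))$ is a nonempty, compact, invariant subset of $X_+\times H(d,F)$, which by assumption is uniformly stable.

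Next I would invoke Proposition \ref{prop 4.3}: the uniformly stable omega-limit set $\omega(v_0,(d,F))$ of the comparable semiflow $\Gamma_t$ is a $1$-cover of $H(d,F)$. This rests on the chain Proposition \ref{theorem 3.3} $\to$ Lemma \ref{lemma 4.1} $\to$ Proposition \ref{prop 4.3}: Proposition \ref{theorem 3.3} gives that every precompact trajectory of the strongly order-preserving partner $\Pi_t$ in \eqref{equ6} is asymptotic to a $1$-cover of $Y=H(d,f)\cong H(d,F)$; Lemma \ref{lemma 4.1} records the order and semicontinuity structure of the union of all such $1$-covers; and then the upper-comparability $\Gamma_t(x_1,y)\leqq\Pi_t(x_2,y)$ for $(x_1,y)\leqq(x_2,y)$ allows one to trap $\omega(v_0,(d,F))$ between $1$-covers of $\Pi_t$ and, using the modulus of uniform stability, to collapse the trapped set onto a single section over each $y$.

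Finally I would extract the almost periodic solution. Write $\omega(v_0,(d,F))=\{(c(\zeta),\zeta):\zeta\in H(d,F)\}$ with $c:H(d,F)\to X$ continuous and $\Gamma_t(c(\zeta),\zeta)=(c(\zeta\cdot t),\zeta\cdot t)$ for all $t\geqq 0$. The restriction to $\omega(v_0,(d,F))$ of the natural projection $P$ is a continuous bijection from a compact space onto $H(d,F)$, hence a flow isomorphism, so the flow on the $1$-cover is conjugate to the minimal almost periodic base flow $(H(d,F),\mathbb{R})$, the hull of the almost periodic function $(d,F)$. In particular $t\mapsto c((d,F)\cdot t)\in X$ is almost periodic, being a uniformly continuous image of the almost periodic motion $t\mapsto (d,F)\cdot t$; and since $\Gamma_t(c((d,F)),(d,F))=(c((d,F)\cdot t),(d,F)\cdot t)$, the function $w(t,x):=c((d,F)\cdot t)(x)$ is an almost periodic solution of \eqref{IBVP-sys}. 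Because $\omega(v_0,(d,F))$ is a $1$-cover, a routine compactness argument yields $\mathrm{dist}_X\big(v(t,\cdot,v_0;(d,F)),c((d,F)\cdot t)\big)\to 0$ as $t\to\infty$: if not, then along some $t_n\to\infty$ with $v(t_n,\cdot)\to x^\ast$ and $(d,F)\cdot t_n\to\zeta^\ast$ one would have $(x^\ast,\zeta^\ast)\in\omega(v_0,(d,F))$, forcing $x^\ast=c(\zeta^\ast)=\lim_n c((d,F)\cdot t_n)$, a contradiction. Thus $v$ is asymptotic to the almost periodic solution $w$, as claimed.

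The main obstacle is Proposition \ref{prop 4.3}, that is, proving the uniformly stable omega-limit set of the \emph{non-monotone} comparable semiflow is exactly a $1$-cover. The difficulty specific to the Dirichlet problem is that $X_+$ has empty interior, so $\Pi_t$ is only strongly order-preserving and cannot be promoted to an eventually strongly monotone semiflow; the CGW-type argument for the eventually strongly monotone case does not apply directly. One must instead run the strongly order-preserving theory (Proposition \ref{theorem 3.3}, built on the skew-product and part-metric techniques of \cite{JZH}) to pin down the $1$-covers of $\Pi_t$, control the union of all $1$-covers (Lemma \ref{lemma 4.1}), and only then carry out the squeezing of $\omega(v_0,(d,F))$ via upper-comparability and uniform stability. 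Granting those three ingredients, the deduction of Theorem \ref{theorem 2.1} above is essentially formal.
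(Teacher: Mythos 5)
Your proposal follows essentially the same route as the paper: precompactness of the orbit from the $L^\infty$-bound plus parabolic regularity, then Proposition \ref{prop 4.3} to conclude that $\omega_\Gamma(v_0,(d,F))$ is a $1$-cover, then reading off asymptotic almost periodicity from the $1$-cover (where you actually supply more detail than the paper does on the final extraction step, which the paper leaves implicit). The one thing you skip is the verification of the hypothesis of Proposition \ref{prop 4.3}: that proposition is conditional on the existence of a $1$-cover $b(\cdot)\in A$ of the monotone partner $\Pi_t$ with $\hat K\cap P^{-1}(y)\geqq(b(y),y)$ for all $y$, and this is not automatic. The paper supplies it in one line: by (H2) one has $f_i(t,0)=0$, so the zero section $0(\cdot)$ is a $1$-cover for $\Pi_t$ and hence lies in $A$, and since all solutions of \eqref{IBVP-sys} under consideration remain in $X_+$ (by hypothesis (I)), $\hat K\cap P^{-1}(y)\geqq(0,y)$ for every $y\in Y$. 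You should add this check; without it the invocation of Proposition \ref{prop 4.3} is formally incomplete, although the fix is immediate.
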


\begin{remark}
We note that for reaction-diffusion system with Dirichlet boundary
condition (\ref{IBVP-sys}), the cone $X_+$ has empty interior in the
state space $X=\Pi_1^n C_0(\bar{\Omega})$. Thus, the skew-product
semiflow  generated by its monotone partner (\ref{IBVP-sys-g-1}) is
only strongly order-preserving, but not eventually strongly
monotone. Consequently, the results in \cite{CGW} can't be used to
study the asymptotic behavior of the solutions to system
(\ref{IBVP-sys}).
\end{remark}

\section{Proof of Theorem \ref{theorem 2.1}}

In order to get the asymptotic almost periodicity of solutions to
system (\ref{IBVP-sys}), we first investigate the asymptotic
behavior of its strongly order-preserving partner. Motivated by
\cite{JZH}, we establish the 1-cover property of omega limit sets
for the strongly order-preserving and uniformly stable skew-product
semiflows $\Pi_t$.

The following result is adopted from \cite[P. 19]{RJGR} or \cite[P.
29]{ShenYi}, see also \cite[P. 634]{NOS}.

\vskip 3mm

\begin{theorem}\label{theorem 3.0}
Let $\Theta_t$ be a skew-product semiflow on $X_+\times Y$. If a
forward orbit $O^+_\Theta(x_0,y_0)$ of $\Theta_t$ is precompact and
uniformly stable, then its omega-limit set $\omega_\Theta(x_0,y_0)$
admits a flow extension which is minimal.
\end{theorem}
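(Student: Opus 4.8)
Put $\Theta_t(x,y)=(u(t,x,y),y\cdot t)$ and $K:=\omega_\Theta(x_0,y_0)$. Since $O^+_\Theta(x_0,y_0)$ is precompact, $K$ is nonempty, compact and $\Theta$-invariant, so $\Theta_t(K)=K$ for all $t\ge0$; a standard argument then produces, through every point of $K$, at least one full orbit lying in $K$. A flow extension therefore exists as soon as such backward orbits are \emph{unique}, equivalently as soon as $\Theta_t|_K$ is injective for each $t>0$ (which makes $\Theta_t|_K$ a homeomorphism of the compact Hausdorff space $K$, so one may set $\Theta_{-t}:=(\Theta_t|_K)^{-1}$). So the proof has three parts: (a) propagate the uniform stability of $O^+_\Theta(x_0,y_0)$ to $K$; (b) deduce injectivity of the time maps, hence the flow extension; (c) deduce minimality.

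Step (a) is the crux. The claim is that there is a modulus $\eta(\cdot)$, depending only on the modulus of uniform stability of $O^+_\Theta(x_0,y_0)$, such that for all $(z_1,y),(z_2,y)\in K$: if $s\ge0$ and $\norm{u(s,z_1,y)-u(s,z_2,y)}\le\eta(\varepsilon)$, then $\norm{u(t+s,z_1,y)-u(t+s,z_2,y)}<\varepsilon$ for every $t\ge0$; moreover this should be robust enough to compare points lying over nearby (not necessarily equal) base points and close to $K$. Each $(z_i,y)\in K$ is a limit $\Theta_{t_n}(x_0,y_0)\to(z_i,y)$, so on any bounded time window the orbit through $(z_i,y)$ is a uniform limit of tails of the reference orbit; one inserts this into the defining inequality of uniform stability with a marginally smaller tolerance and passes to the limit, using joint continuity of $u(t,x,y)$, the admissibility hypotheses (uniform continuity on $\mathbb{R}\times C$ for compact $C$), and compactness of $Y$. \emph{The main obstacle is exactly this passage to the limit}: the tail $\Theta_{t_n}(x_0,y_0)$ sits over $y_0\cdot t_n$, which only converges to $y$, so the comparison must be carried out over nearby-but-unequal base points; one controls this by a diagonal extraction together with the equicontinuity of the relevant families of time-$t$ maps near $K$ forced by the uniform-stability inequality (and facilitated, in the almost periodic setting at hand, by the equicontinuity of the base flow on $Y$).

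Granting (a), here is (b). Suppose $u(T,z_1,y)=u(T,z_2,y)$ with $(z_i,y)\in K$ and $2\varepsilon_0:=\norm{z_1-z_2}>0$. Using $\Theta_s(K)=K$, pick backward orbits $(a_i(s),y\cdot(-s))\in K$ with $u(s,a_i(s),y\cdot(-s))=z_i$. Applying the estimate of (a) to the same-base pair $(a_1(s),y\cdot(-s)),(a_2(s),y\cdot(-s))\in K$ forces $\norm{a_1(s)-a_2(s)}>\eta(\varepsilon_0)$ for every $s\ge0$, for otherwise $\norm{z_1-z_2}<\varepsilon_0$. Choose $s_k\to\infty$ with $y\cdot(-s_k)$ convergent and extract $(a_i(s_k),y\cdot(-s_k))\to(b_i,y^*)\in K$; then $b_1\ne b_2$ since $\norm{b_1-b_2}\ge\eta(\varepsilon_0)$, while the robust form of (a) gives $u(s_k,b_i,y^*)\to z_i$, hence $u(s_k+T,b_i,y^*)=u\bigl(T,u(s_k,b_i,y^*),y^*\cdot s_k\bigr)\to u(T,z_1,y)=u(T,z_2,y)$. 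Thus along the single sequence $(s_k)$ the orbits of $(b_1,y^*)$ and $(b_2,y^*)$ are roughly $2\varepsilon_0$ apart, yet $T$ units later they are arbitrarily close. But the full uniform stability of $O^+_\Theta(b_1,y^*)$ compared with the same-base orbit $O^+_\Theta(b_2,y^*)$ — applied at the times $s_k$, for which the separation exceeds $\varepsilon_0$ — shows that $\norm{u(\sigma,b_1,y^*)-u(\sigma,b_2,y^*)}>\eta(\varepsilon_0)$ for \emph{all} $\sigma\ge0$, contradicting the near-coincidence at $\sigma=s_k+T$. Hence $\Theta_t|_K$ is injective and $\Theta|_K$ extends to a flow.

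Finally (c), minimality: I show every forward orbit in $K$ is dense in $K$. Given $(p,\bar y),(q,\tilde y)\in K$, choose $r_n,\rho_n\to\infty$ with $\Theta_{r_n}(x_0,y_0)\to(p,\bar y)$, $\Theta_{\rho_n}(x_0,y_0)\to(q,\tilde y)$ and $\rho_n>r_n$, and set $\sigma_n:=\rho_n-r_n\ge0$. Then $\Theta_{\sigma_n}\bigl(\Theta_{r_n}(x_0,y_0)\bigr)=\Theta_{\rho_n}(x_0,y_0)\to(q,\tilde y)$ while $\Theta_{r_n}(x_0,y_0)\to(p,\bar y)$; the robust form of (a) bounds the distortion of $\Theta_{\sigma_n}$ near $K$ (after reconciling the base points $y_0\cdot r_n$ and $\bar y$), so $\Theta_{\sigma_n}(p,\bar y)\to(q,\tilde y)$ as well, i.e.\ $(q,\tilde y)\in\overline{O^+_\Theta(p,\bar y)}$. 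Since $(p,\bar y),(q,\tilde y)\in K$ were arbitrary, $\overline{O^+_\Theta(p,\bar y)}=K$ for every $(p,\bar y)\in K$, so every nonempty compact (flow-)invariant subset of $K$ equals $K$; that is, the flow on $K$ is minimal. This completes the proof.
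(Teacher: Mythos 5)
The paper does not actually prove Theorem \ref{theorem 3.0}; it imports it verbatim from Sacker--Sell, Shen--Yi and Novo--Obaya--Sanz, so there is no internal proof to compare against and your argument has to stand on its own. Its skeleton (propagate uniform stability to $\omega_\Theta(x_0,y_0)$, deduce injectivity of $\Theta_t$ on the limit set to get the flow extension, then deduce minimality) is the standard one, and the purely same-fiber applications of uniform stability you make are fine.

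The genuine gap is the one you yourself flag as ``the main obstacle'' and then do not close: the \emph{robust form} of step (a), i.e.\ control of $\norm{u(t,z,y)-u(t,z',y')}$ for $z'$ near $z$ and $y'$ near \emph{but not equal to} $y$, \emph{uniformly in} $t\geqq 0$. Uniform stability as defined in the paper compares two solutions over the \emph{same} base point; it says nothing about how $u(t,x,\cdot)$ varies with the base point over an infinite time horizon, and joint continuity of the semiflow only gives closeness on compact time intervals. (The basic propagation of stability to $K$ survives because there one fixes $t$, lets the approximating times $t_n\to\infty$, and the base-point error vanishes for each fixed $t$ while the stability bound is uniform in $t$; that ordering of limits is not available in your later uses.) Indeed, one can have a fiberwise isometric -- hence uniformly stable -- cocycle for which $\sup_{t\ge 0}\norm{u(t,x,y_n)-u(t,x,y)}$ does not tend to $0$ as $y_n\to y$; so the robust form is not a consequence of the hypotheses plus continuity, but is essentially part of what the cited theorem establishes. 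Both of your remaining steps lean on it in an essential way: in (b), the claim $u(s_k,b_i,y^*)\to z_i$ compares the solution through $(b_i,y^*)$ with the one through $(a_i(s_k),y\cdot(-s_k))$ over base points that merely converge, at times $s_k\to\infty$ (and likewise $u(s_k+T,b_i,y^*)\to u(T,z_1,y)$); in (c), the claim $\Theta_{\sigma_n}(p,\bar y)\to(q,\tilde y)$ needs to reconcile $\bar y$ with $y_0\cdot r_n$ while $\sigma_n\to\infty$. Without the robust form these limits are unjustified and the contradictions in (b) and the density argument in (c) collapse. To repair the proof you would either have to prove the cross-fiber equicontinuity of $\{\Theta_t|_K\}_{t\geqq0}$ (which in the references is extracted from uniform stability together with the recurrence of $K$ and the equicontinuity of the almost periodic base flow -- a nontrivial argument), or restructure (b) and (c) so that every comparison is made on a single fiber, as the classical proofs do.
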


Now fix $(x_0,y_0)\in X_+\times Y$ and let $K=\omega(x_0,y_0)$ be
its omega-limit set with respect to $\Pi_t$. For any given $y\in Y$,
we define
$$(p(y),y) = \mbox{g.l.b. of }~K \bigcap P^{-1}(y).$$
Then from \cite[Proposition 3.1]{JZH}, it follows that
$\omega(p(y),y)$  is $1$-cover of $Y$. Denote
$\{(p_{\ast}(y),y)\}=\omega(p(y),y)\bigcap P^{-1}(y) $,
 by \cite[Proposition 3.2]{JZH} one has
\begin{equation}\label{3.0}u(t,p_\ast(y),y)=p_\ast(y\cdot t)~\mbox{ for any }y\in Y \mbox{
and } t\in\mathbb{R}.\end{equation} So we can denote the 1-cover
$\omega(p(y),y)$ by $p_\ast(\cdot)$.

\vskip 3mm

\begin{lemma}
\label{lemma 3.1} Assume that there exists a point $(z,y)\in K$ such
that $p_{\ast}(y) < z$. Then for any $t\in \mathbb{R}$, there exist
a neighborhood $U$ of $p_{\ast}(y)$ and a neighborhood $V$ of $z$
such that
$$u(t,U,y) < u(t,V,y).$$
\end{lemma}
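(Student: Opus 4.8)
The plan is to use the smoothing effect of the parabolic flow to get around the only real difficulty here, namely that the cone $X_+$ has empty interior. The substantive case is $t>0$; the cases $t=0$ and $t<0$ will be reduced to it by means of the flow extension on $K$ supplied by Theorem~\ref{theorem 3.0} together with the extension \eqref{3.0} of the $1$-cover $p_\ast$.

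\emph{The case $t>0$.} Write $\phi=u(t,p_\ast(y),y)$ and $\psi=u(t,z,y)$; by monotonicity $\phi\leqq\psi$. The first step is to upgrade this to strong positivity of $\psi-\phi$. Setting $w(s)=u(s,z,y)-u(s,p_\ast(y),y)$ for $0\leqq s\leqq t$ and using that $f$ is $C^1$, the mean value theorem shows that $w$ solves a linear parabolic system with Dirichlet boundary condition,
\[
\frac{\partial w_i}{\partial s}=\mu_i(s)\Delta w_i+\sum_{j=1}^{n}a_{ij}(s,x)\,w_j,\qquad a_{ij}(s,x)=\int_0^1\frac{\partial f_i}{\partial x_j}\bigl(s,\theta\,u(s,z,y)(x)+(1-\theta)\,u(s,p_\ast(y),y)(x)\bigr)\,d\theta,
\]
with $w(0)=z-p_\ast(y)\in X_+\setminus\{0\}$. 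By (H3) the off-diagonal coefficients $a_{ij}$ are nonnegative and the coupling hypothesis makes the system fully coupled (irreducible), so the parabolic strong maximum principle together with the Hopf boundary lemma give, for every $s>0$, that each component $w_i(s)$ is strictly positive in $\Omega$ with strictly negative outward normal derivative on $\partial\Omega$; equivalently, $\psi-\phi$ lies in the \emph{interior} of the positive cone of $Z:=\Pi_1^{n}C_0^1(\bar\Omega)$. (For $t\geqq t_0$ this is already contained in the strong order-preserving property recorded above.) On the other hand, by parabolic regularity the map $u(t,\cdot,y):X_+\to Z$ is continuous for $t>0$. Hence I choose $Z$-neighborhoods $W_1\ni\phi$ and $W_2\ni\psi$ with $W_2-W_1\subset\operatorname{int}Z_+$, and then $X_+$-neighborhoods $U\ni p_\ast(y)$ and $V\ni z$ with $u(t,U,y)\subset W_1$ and $u(t,V,y)\subset W_2$; for any $\hat u\in U$, $\hat v\in V$ we obtain $u(t,\hat v,y)-u(t,\hat u,y)\in\operatorname{int}Z_+\subset X_+\setminus\{0\}$, that is $u(t,U,y)<u(t,V,y)$.

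\emph{The cases $t\leqq0$.} By Theorem~\ref{theorem 3.0} the set $K$ carries a minimal flow extension, and by \eqref{3.0} the $1$-cover $p_\ast$ extends to all $t\in\mathbb{R}$; thus $u(t,p_\ast(y),y)=p_\ast(y\cdot t)$ and $u(t,z,y)=z\cdot t$ are defined for $t\leqq0$, with $(z\cdot t,y\cdot t)\in K$. Using the construction of $p_\ast$ from the greatest lower bound $p$ of $K$ together with monotonicity (and the fact that $\Pi_{-t}$ carries $K\cap P^{-1}(y\cdot t)$ onto $K\cap P^{-1}(y)$) one checks that $p_\ast(y\cdot t)\leqq z\cdot t$, and equality is impossible since it would propagate forward to $p_\ast(y)=z$; hence $p_\ast(y\cdot t)<z\cdot t$ in the fibre over $y\cdot t$. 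Since every point of $K$ is a solution value of \eqref{IBVP-sys-g-1} at all positive times, the strong maximum principle argument of the previous paragraph applies over the fibre $y\cdot t$ and yields $z\cdot t-p_\ast(y\cdot t)\in\operatorname{int}Z_+$, after which the same continuity-and-perturbation step delivers the required neighborhoods of $p_\ast(y)$ and $z$.

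The one genuinely delicate point — and the reason this is a lemma about reaction-diffusion systems with Dirichlet data rather than about abstract semiflows — is the passage from the merely pointwise strict order $p_\ast(y)<z$ to an order that survives small perturbations: near $\partial\Omega$ the difference $z-p_\ast(y)$ is of the same sup-norm order as the perturbation, so one cannot argue inside $X$. The resolution is precisely the Hopf boundary lemma, which places $\psi-\phi$ in the interior of the positive cone of the smaller space $C_0^1(\bar\Omega)^n$, where that cone is fat; everything else is routine continuity.
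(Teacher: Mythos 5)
Your argument for $t>0$ is sound, and it is a genuinely different route from the paper's: you re-derive (and, for $0<t<t_0$, sharpen) the strong order-preserving property directly from the cooperative irreducible linearization along the two solutions, the strong maximum principle and the Hopf boundary lemma, exploiting the smoothing of $u(t,\cdot,y)$ from $X_+$ into $Z=\Pi_1^n C_0^1(\bar\Omega)$. The paper never touches the PDE here: it argues by contradiction that if the separating neighborhoods fail to exist at some time, then $u(t,p_\ast(y),y)=u(t,z,y)$ for all $t\leqq r_0$, and then uses minimality of $K$ (so that $(z,y)\in\alpha(z,y)$), the $1$-cover property of $\omega(p(y),y)$ and \eqref{3.0} to pass to the limit along a sequence $\tau_n\to-\infty$ and conclude $(p_\ast(y),y)=(z,y)$, a contradiction. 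Your approach buys an explicit, quantitative separation for every positive time; the paper's buys applicability without backward smoothing.

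The gap is in your treatment of $t\leqq 0$, and $t=0$ is precisely the case the paper actually uses (inequality \eqref{3.4} in the proof of Proposition \ref{theorem 3.3}). Your ``continuity-and-perturbation step'' works only because, for $t>0$, the map $u(t,\cdot,y)$ carries a sup-norm neighborhood into a $Z$-neighborhood; at $t=0$ there is no smoothing, and knowing $z-p_\ast(y)\in\operatorname{int}Z_+$ does \emph{not} produce sup-norm neighborhoods $U\ni p_\ast(y)$, $V\ni z$ with $U<V$. Concretely, for any $\delta>0$ choose a bump $\chi\in C_0(\bar\Omega)$, $0\leqq\chi\leqq1$, supported in a region where $z\geqq\delta/2$ while $z-p_\ast(y)<\delta$ (such a region exists because $z-p_\ast(y)$ tends to $0$ at $\partial\Omega$); then $\hat u=p_\ast(y)+\frac{\delta}{2}\chi$ and $\hat v=z-\frac{\delta}{2}\chi$ lie in the $\delta$-balls about $p_\ast(y)$ and $z$ in $X_+$, yet $\hat v-\hat u$ is negative where $\chi=1$. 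This is exactly the boundary obstruction you flag in your closing paragraph, but the Hopf lemma only fattens the cone in $Z$, not in $X$, so it cannot be cashed in without first applying the semiflow for some positive time. (For $t<0$ there is the additional problem that $u(t,\cdot,y)$ is not defined on a whole $X_+$-neighborhood, only on $K$ and on the $1$-cover, so the asserted conclusion does not even parse as written.) To reach $t\leqq0$ you need a mechanism of the paper's type: suppose the neighborhoods fail to exist at some time, show via the strong order-preserving property applied at sufficiently early times $\bar t$ and pushed forward by at least $t_0$ that the two backward orbits must then coincide on some $(-\infty,r_0]$, and use the recurrence $(z,y)\in\alpha(z,y)$ together with \eqref{3.0} to force $z=p_\ast(y)$.
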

\begin{proof}
By the minimality of $K$, for any $t\in \mathbb{R}$, there is
$\tau_n \rightarrow +\infty$ such that $\tau_n + t \geqq 0$ and
$$\Pi_{\tau_n} \circ \Pi_t (z,y) \rightarrow \Pi_t (z,y), ~\mbox{as}~n \rightarrow \infty.$$
Note that the monotonicity implies that
$$\Pi_{\tau_n} \circ \Pi_t (p_{\ast}(y),y)  \leqq  \Pi_{\tau_n} \circ \Pi_t (z,y).$$
Letting $n \rightarrow \infty$, we then get $\Pi_t(p_{\ast}(y),y)
\leqq \Pi_t(z,y)$, thus,
\begin{equation}\label{3.1}
u(t,p_{\ast}(y),y) \leqq u(t,z,y),~\forall t \in \mathbb{R}.
\end{equation}

Suppose that the conclusion of the lemma does not hold. Then we
claim that there exists $r_0\in \mathbb{R}$ such that
\begin{equation}\label{3.2}
u(t,p_{\ast}(y),y) = u(t,z,y),~\forall t \leqq r_0.
\end{equation}
Otherwise. By (\ref{3.1}), one has that for any $r \in \mathbb{R}$,
there exists some $\bar{t}\leqq r$ such that
$$u(\bar{t},p_{\ast}(y),y) < u( \bar{t},z,y).$$
Since $\Pi_t$ is strongly order-preserving, it follows that there
exist a neighborhood $\bar{U}$ of $u(\bar{t},p_{\ast}(y),y)$ and a
neighborhood $\bar{V}$ of $u(\bar{t},z,y)$ such that
$$u(r - \bar{t} + t_0,\bar{U},y\cdot \bar{t}) < u(r - \bar{t} + t_0,\bar{V},y\cdot \bar{t}).$$
Note that by the continuity of $\Pi_t$, there exist a neighborhood
$\hat{U}$ of $p_{\ast}(y)$ with $u(\bar{t},\hat{U},y)\subset
\bar{U}$, and a neighborhood $\hat{V}$ of $z$ with
$u(\bar{t},\hat{V},y)\subset \bar{V}$. So we have
\begin{equation*}
u(r - \bar{t} + t_0,u(\bar{t},\hat{U},y),y\cdot \bar{t}) < u(r -
\bar{t} + t_0,u(\bar{t},\hat{V},y),y\cdot \bar{t}).
\end{equation*}
Thus,
\begin{equation*}
u(r + t_0,\hat{U},y) < u(r + t_0,\hat{V},y).
\end{equation*}
Since $r$ is arbitrary, the conclusion of the lemma holds. A
contradiction. So we proved the claim.

By the minimality of $K$, we obtain that $\alpha(z,y) = K.$ Hence,
$(z,y)\in \alpha(z,y).$  Then it follows that there exists a
sequence $\tau_n \rightarrow -\infty$ such that $\tau_n \leqq r_0$
and $\Pi_{\tau_n} (z,y) \rightarrow  (z,y)$. Thus the 1-cover
property of $\omega(p_{\ast}(y),y)$ and (\ref{3.0}) imply that
$\Pi_{\tau_n} (p_{\ast}(y),y) \rightarrow (p_{\ast}(y),y)$. By
(\ref{3.2}), one has
$$u(\tau_n,p_{\ast}(y),y) = u(\tau_n,z,y).$$
By letting $n \rightarrow +\infty$, we get
$$(p_{\ast}(y),y)=(z,y).$$
A contradiction to the assumption. This completes the proof.
\end{proof}

\vskip 3mm

The following Proposition shows the 1-cover property of omega limit
sets for $\Pi_t$.

\begin{proposition}
\label{theorem 3.3} For any $(x_0,y_0)\in{X_+\times Y}$,
$\omega(x_0,y_0)$ is a 1-cover of $Y$.
\end{proposition}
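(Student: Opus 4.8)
The plan is to argue by contradiction and to reduce the statement to the assertion that the minimal set $K=\omega(x_0,y_0)$ coincides with the $1$-cover $p_\ast(\cdot)$: since $K=p_\ast(\cdot)$ yields the conclusion at once, and two distinct minimal sets are disjoint, it suffices to produce a single point lying in both $K$ and $p_\ast(\cdot)$. As a preliminary step I would check that $p_\ast(\cdot)$ lies below $K$, i.e.\ that $p_\ast(y)\leqq z$ for every $(z,y)\in K$. Fix such a $(z,y)$. Since $K$ is minimal (Theorem~\ref{theorem 3.0}), $(z,y)\in\omega(z,y)$, so $\Pi_{t_n}(z,y)\to(z,y)$ for some $t_n\to+\infty$; in particular $y\cdot t_n\to y$ and $u(t_n,z,y)\to z$. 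From $p(y)\leqq z$ and monotonicity, $u(t_n,p(y),y)\leqq u(t_n,z,y)$; by precompactness of $O^+(p(y),y)$ and the $1$-cover property of $\omega(p(y),y)=p_\ast(\cdot)$, every limit point of $u(t_n,p(y),y)$ lies in $\omega(p(y),y)\cap P^{-1}(y)=\{(p_\ast(y),y)\}$, so $u(t_n,p(y),y)\to p_\ast(y)$, and passing to the limit gives $p_\ast(y)\leqq z$.

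Assume now, for contradiction, that $K$ is not a $1$-cover. The sets $K$ and $p_\ast(\cdot)$ are both minimal, so if they intersect we obtain $K=p_\ast(\cdot)$, contrary to assumption; hence $K\cap p_\ast(\cdot)=\emptyset$. Together with the previous paragraph and the definition of the cone order this forces $p_\ast(y')<z'$ for \emph{every} $(z',y')\in K$, and since $K$ is not a $1$-cover some fibre of $K$ contains at least two points, so such pairs occur. Fix one, $(z,y)\in K$ with $p_\ast(y)<z$. By Lemma~\ref{lemma 3.1}, for every $t\in\mathbb{R}$ there are neighbourhoods $U_t\ni p_\ast(y)$ and $V_t\ni z$ with $u(t,U_t,y)<u(t,V_t,y)$; comparing the centres, $p_\ast(y\cdot t)<u(t,z,y)$ for all $t\in\mathbb{R}$, where for $t<0$ one reads $u(t,z,y)$ through the flow extension on $K$ and $u(t,p_\ast(y),y)=p_\ast(y\cdot t)$ through \eqref{3.0}.

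The contradiction should be extracted from uniform stability. Write $w(t)=u(t,z,y)-p_\ast(y\cdot t)\in X_+\setminus\{0\}$, put $\eps_0=\tfrac{1}{3}\norm{z-p_\ast(y)}>0$, and let $\delta_0=\delta(\eps_0)>0$ be a modulus of uniform stability, which by minimality of $p_\ast(\cdot)$ may be taken independent of the base point. If $\norm{w(t^\ast)}\leqq\delta_0$ for some $t^\ast\in\mathbb{R}$, then, viewing $u(t^\ast,z,y)$ as a $\delta_0$-perturbation of $p_\ast(y\cdot t^\ast)$ over the base point $y\cdot t^\ast$, uniform stability gives $\norm{w(s)}<\eps_0$ for all $s\geqq t^\ast$; choosing $s=0$ when $t^\ast\leqq0$, or $s=s_n\to+\infty$ along a recurrence sequence of $(z,y)$ (so that $u(s_n,z,y)\to z$ and $\norm{w(s_n)}\to3\eps_0$) when $t^\ast\geqq0$, contradicts $\norm{w(0)}=3\eps_0$. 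Hence $\norm{w(t)}>\delta_0$ for all $t\in\mathbb{R}$. I then expect the \emph{strong} separation of Lemma~\ref{lemma 3.1}, together with the uniform stability of the orbit $O^+(z,y)$ and the backward recurrence $\alpha(z,y)=K$, to be played against this uniform lower bound so as to force $z=p_\ast(y)$, which is absurd. Therefore $K\cap p_\ast(\cdot)\neq\emptyset$, $K=p_\ast(\cdot)$, and $\omega(x_0,y_0)$ is a $1$-cover of $Y$.

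The main obstacle, I expect, is this last step: converting the qualitative order-separation furnished by Lemma~\ref{lemma 3.1} into a quantitative contradiction with uniform stability. The delicate points are to single out the right pair of trajectories over a common fibre, to verify that the modulus of uniform stability is uniform along $p_\ast(\cdot)$, and to exploit that the separation persists at \emph{all} $t\in\mathbb{R}$ (not merely for large $t$) — which is precisely why Lemma~\ref{lemma 3.1} is phrased for every $t\in\mathbb{R}$. By contrast, the reduction in the first two paragraphs is a soft consequence of minimality, the cone order, and the $1$-cover property of $\omega(p(y),y)$ obtained in \cite{JZH}.
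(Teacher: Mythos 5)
Your set-up follows the paper's strategy: you reduce the statement to showing that $K=\omega(x_0,y_0)$ meets the $1$-cover $p_\ast(\cdot)$, you verify the fibrewise lower bound $p_\ast(y)\leqq K\cap P^{-1}(y)$ (which the paper simply quotes from \cite[Proposition 3.1]{JZH}), and you correctly identify Lemma \ref{lemma 3.1} as the source of the strict separation $u(t,U_t,y)<u(t,V_t,y)$ at \emph{all} times. But the decisive step is missing, and you say so yourself: ``I then expect \dots to be played against this uniform lower bound so as to force $z=p_\ast(y)$'' is a hope, not an argument. The bound $\norm{w(t)}>\delta_0$ for all $t$ that you extract is not by itself contradictory --- a uniformly stable minimal set can perfectly well stay a bounded distance above another minimal set for all time --- so nothing in your third paragraph actually closes the proof. (There is also a smaller unproved claim there: applying uniform stability at a time $t^\ast<0$ requires that the backward extension of the orbit through $(z,y)$ inherit uniform stability, which needs justification.)

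The paper's mechanism for the contradiction is genuinely different from what you are reaching for, and it is worth recording. First, one applies Lemma \ref{lemma 3.1} at $t=0$ to \emph{every} point of the compact fibre $K\cap P^{-1}(y)$ and extracts a finite subcover, producing a uniform $\epsilon_0>0$ with $B^+(p_\ast(y),\epsilon_0)<K\cap P^{-1}(y)$. Second, uniform stability converts this into the statement that $\omega(x,y)=p_\ast(\cdot)$ for every $x\in B^+(p_\ast(y),\delta_0)$ with $\delta_0=\delta_0(\epsilon_0)$: the omega-limit of such an $x$ stays $\epsilon_0$-close to $p_\ast(y)$ on the fibre over $y$, hence lies strictly below $K\cap P^{-1}(y)$, and \cite[Proposition 3.1(3)]{JZH} then forces it to equal $p_\ast(\cdot)$. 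Third --- and this is the idea your proposal lacks entirely --- one runs a connectedness argument along the segment $x_\tau=p_\ast(y)+\tau(\hat z-p_\ast(y))$: the set $L=\{\tau\in[0,1]:\omega(x_\tau,y)=p_\ast(\cdot)\}$ contains $[0,\bar\tau]$ by the second step, is an interval, and is closed by uniform stability; if its supremum $\tau_0$ were less than $1$, then for $\tau$ slightly above $\tau_0$ uniform stability plus monotonicity place a limit point $\tilde x$ of $u(t_n,x_\tau,y)$ inside $B^+(p_\ast(y),\epsilon_0)$, whence $\omega(x_\tau,y)=\omega(\tilde x,y)=p_\ast(\cdot)$ and $\tau\in L$, a contradiction; so $1\in L$ and $\omega(\hat z,y)=p_\ast(\cdot)$, contradicting $\hat z\in K\cap P^{-1}(y)$ with $p_\ast(y)<\hat z$. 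Without this segment argument (or an equivalent device) your proof does not go through.
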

\begin{proof}
Now fix  $(x_0,y_0)\in{X_+\times Y}$ and set $K = \omega(x_0,y_0)$.
For any $y\in Y$, by \cite[Proposition 3.1]{JZH}, we have
$(p_\ast(y),y)\leqq K\bigcap P^{-1}(y)$.

We claim that $\{(p_\ast(y),y)\}= K\bigcap P^{-1}(y), ~\forall y\in
Y$. Suppose not. Then there exist some $y\in Y$ and a point
$(\hat{z},y)\in K$ such that $p_{\ast}(y) < \hat{z}$. By the
minimality of $K$, we get that
\begin{equation*}
p_{\ast}(y) < z,~\forall(z,y)\in K\bigcap P^{-1}(y).
\end{equation*}
Then it follows from Lemma \ref{lemma 3.1} that there exist a
neighborhood $U_z$ of $p_{\ast}(y)$ and a neighborhood $V_z$ of $z$
such that \begin{equation}\label{3.4}U_z < V_z.\end{equation} Since
$\{V_{z} : (z,y) \in K \bigcap {P^{-1}(y)}\}$ is an open cover of $K
\bigcap {P^{-1}(y)}$, we can find a finite subcover, denoted by
$\{V_1,V_2, \cdots, V_n\}$. Note that by (\ref{3.4}) there exist
neighborhoods $U_i,~i=1,2,\cdots,n$ of $p_{\ast}(y)$ such that
$$U_1 < V_1, ~U_2 < V_2,~\cdots, ~U_n < V_n.$$
Therefore, $\bigcap_{i=1}^{n}{U_i} < \bigcup_{i=1}^{n}{V_i}.$ Since
$K \bigcap {P^{-1}(y)}\subset \bigcup_{i=1}^{n}{V_i}$, we have
\begin{equation*}
\bigcap_{i=1}^{n}{U_i} < K \bigcap {P^{-1}(y)}.
\end{equation*}
So we can take an $\epsilon_0 > 0$ such that
\begin{equation}\label{3.5}
B^+(p_{\ast}(y),\epsilon_0) < K \bigcap {P^{-1}(y)},
\end{equation}
where $B^+(p_{\ast}(y),\epsilon_0) = \{ x \in X_+ : x \geqq
p_{\ast}(y),~\norm{x - p_{\ast}(y)} \leqq \epsilon_0\}$. By the
uniform stability of $\Pi_t(p_{\ast}(y),y)$, there exists $\delta_0
= \delta_0(\epsilon_0)\leqq \epsilon_0$ such that
\begin{equation*}
\norm{ u - p_{\ast}(y) } \leqq \epsilon_0,~\forall (u,y) \in
\omega(x,y)\bigcap{P^{-1}(y)}
\end{equation*}
whenever $\norm{x - p_{\ast}(y)} \leqq \delta_0$. Combing with
(\ref{3.5}), we get
$$(p_{\ast}(y),y) \leqq \omega(x,y)  \bigcap{P^{-1}(y)} < K \bigcap
{P^{-1}(y)}$$ for any $x\in B^+(p_{\ast}(y),\delta_0) $.
Since $\omega(x,y)$ is minimal, using \cite[Proposition 3.1
(3)]{JZH}, we obtain \begin{equation}\label{3.6}\omega(x,y) =
\omega(p(y),y)=p_\ast(\cdot),\quad\forall x \in
B^+(p_{\ast}(y),\delta_0).\end{equation}

Set
$$L = \{\tau \in [0,1]: x_\tau = p_{\ast}(y) + \tau(\hat{z} - p_{\ast}(y)),~\omega(x_\tau,y) = p_\ast(\cdot) \}.$$
By (\ref{3.6}), there exists a $\bar{\tau}
> 0$ such that $[0,\bar{\tau}] \subset L.$ It's easy to see that $L$ is an
interval. Now we show that $L$ is closed, that is, $L = [0,\tau_0]$
with $0 < \tau_0=\sup\{\tau:\tau\in L\} <1$. Note that
$\Pi_t(x_{\tau_0},y)$ is uniformly stable. Let $\delta(\epsilon)$ be
the modulus of uniform stability for $\epsilon>0$. Thus, we take
$\tau\in[0,\tau_0)$ with $\norm{x_\tau-x_{\tau_0}}<\delta(\epsilon)$
and we get
$$\norm{u(t,x_\tau,y)-u(t,x_{\tau_0},y)}<\epsilon,~~\forall
t\geqq0.$$ Since $\omega(x_\tau,y)=p_\ast(\cdot)$, there is a
$\hat{t}$ such that $$\norm{u(t,x_\tau,y)-p_\ast(y\cdot
t)}<\epsilon,~~\forall t\geqq \hat{t}.$$ Then, we deduce that
$$\norm{u(t,x_{\tau_0},y)-p_\ast(y\cdot t)}<2\epsilon,~~\forall
t\geqq \hat{t},$$ and hence $\omega(x_{\tau_0},y)=p_\ast(\cdot)$. So
$L$ is closed.

Then by a similar argument in the proof of \cite[Theorem 4.1]{JZH},
we can get a contradiction. Indeed, since $L = [0,\tau_0]$ with $0 <
\tau_0 <1$, for any $\tau \in (\tau_0,1)$ we have $(p_{\ast}(y),y)
\notin \omega(x_\tau,y)$. For $\epsilon_0$ defined in (\ref{3.5}),
by the uniform stability of the orbit, we get
\begin{equation}\label{3.7}
\norm{ u(t,x_\tau,y) - u(t,x_{\tau_0},y) } < \epsilon_0,
\quad\forall t \geqq 0
\end{equation}
whenever $0 < \tau - \tau_0 \ll 1 $. Let $\{t_n\}$ be such that
$\Pi_{t_n}(x_{\tau_0},y) \rightarrow (p_{\ast}(y),y) $. Choosing a
subsequence if necessary, we may assume that $\Pi_{t_n}(x_{\tau},y)
\rightarrow (\tilde {x},y)$ for $0<\tau-\tau_0\ll 1$. By
(\ref{3.7}), we obtain $\norm{ \tilde {x} - p_{\ast}(y) } \leqq
\epsilon_0$. Thus, from the monotonicity, $\tilde {x} \in
B^+(p_{\ast}(y),\epsilon_0)$. So by (\ref{3.5}), $\tilde {x} <
K\bigcap {P^{-1}(y)}$. Using \cite[Proposition 3.1 (3)]{JZH} again,
we get $\omega(\tilde {x},y) = \omega(p(y),y)=p_\ast(\cdot)$. Then
the minimality of $\omega(x_{\tau},y)$ implies that
$\omega(x_{\tau},y) = \omega(\tilde {x},y) = p_\ast(\cdot)$, which
is a  contradiction to the definition of $\tau_0$. Thus, $K \bigcap
{P^{-1}(y)} = \{(p_{\ast}(y),y)\}$ for all $y\in Y$. The minimality
deduces that $K$ is a 1-cover of $Y$.
\end{proof}

Denote $$A = \bigcup_{c(\cdot) \textnormal{ is a 1-cover for
$\Pi_t$}} c(\cdot)
$$ of all 1-covers of $Y$ for $\Pi_t$. For each $y \in Y$, set $A(y)
= A\bigcap P^{-1}(y)$. Based on Proposition \ref{theorem 3.3}, we
can get

\begin{lemma}
\label{lemma 4.1} $A$ is totally ordered with respect to `$<$', and
for each $y \in Y$, $A(y)$ is homeomorphic to a closed interval in
$\mathbb{R}$.
\end{lemma}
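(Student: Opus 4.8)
The plan is to establish both assertions simultaneously by exploiting Proposition \ref{theorem 3.3}, which guarantees that \emph{every} precompact omega-limit set of $\Pi_t$ is a $1$-cover, together with the monotonicity and uniform stability of $\Pi_t$. First I would show that $A$ is totally ordered with respect to `$<$'. Take two $1$-covers $c(\cdot)$ and $\tilde c(\cdot)$ and fix any $y\in Y$. If $c(y)$ and $\tilde c(y)$ are not comparable, consider the point $w = c(y)\wedge \tilde c(y) = \mathrm{g.l.b.}\{c(y),\tilde c(y)\}$, which exists in $X_+$; by monotonicity of $\Pi_t$ one has $\omega(w,y)\leqq c(\cdot)$ and $\omega(w,y)\leqq \tilde c(\cdot)$ on the fiber over $y$, while by Proposition \ref{theorem 3.3} $\omega(w,y)$ is itself a $1$-cover, say $q(\cdot)$, with $q(y)\leqq c(y)$ and $q(y)\leqq \tilde c(y)$. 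If $q(y)<c(y)$ and $q(y)<\tilde c(y)$ both held, I would run the strongly order-preserving dichotomy argument of Lemma \ref{lemma 3.1} (applied inside the two $1$-covers $q$ and $c$, and $q$ and $\tilde c$) to derive strict separation $B^+(q(y),\eps_0) < c(\cdot)$ and $B^+(q(y),\eps_0)<\tilde c(\cdot)$, and then the uniform-stability/connectedness argument of Proposition \ref{theorem 3.3} (the set $L$ argument along the segment from $q(y)$ to $c(y)$, resp.\ to $\tilde c(y)$) forces $c(\cdot)=q(\cdot)=\tilde c(\cdot)$, contradicting incomparability. Hence one of $c(y),\tilde c(y)$ equals $q(y)$, i.e. $c(y)$ and $\tilde c(y)$ are comparable. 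Since $y$ is arbitrary and the order on distinct $1$-covers over one fiber propagates to all fibers by the flow-extension identity $u(t,c(y),y)=c(y\cdot t)$ and monotonicity, we get that $A$ is totally ordered by `$<$'.

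Next I would endow $A(y)$ with the order `$<$' restricted from $X$ and show it is order-isomorphic, hence homeomorphic, to a closed interval of $\mathbb{R}$. The key structural facts are: (i) $A(y)$ is totally ordered (just proved); (ii) $A(y)$ is closed in $X_+$ — this uses that $\Pi_t$ is fiber-compact and that a limit of $1$-covers over $y$ is, by the continuity of $c\mapsto$ its orbit and Proposition \ref{theorem 3.3} applied to the limiting omega-limit set, again a $1$-cover over $y$; (iii) $A(y)$ is bounded in $X_+$ by (H4) and the comparison principle, and in fact lies in a compact order interval; (iv) $A(y)$ is order-connected, i.e. if $a<b$ are in $A(y)$ and $a<x<b$ with $x$ on the line segment between them, then $\omega(x,y)$ is a $1$-cover lying between $a(\cdot)$ and $b(\cdot)$, so $x$'s fiber component lies in $A(y)$; here again the uniform-stability `$L$ is closed' argument of Proposition \ref{theorem 3.3} is what forces the intermediate omega-limit sets into $A(y)$. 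A totally ordered, order-connected, compact (closed and bounded in a way that makes order intervals compact) subset of an ordered Banach space with the stated g.l.b./l.u.b.\ property is order-isomorphic to a compact interval of $\mathbb{R}$ via an Urysohn-type monotone continuous parametrization; the map $c(\cdot)\mapsto$ real parameter can be built, as in \cite{CGW}, from a strictly monotone continuous functional on the chain (e.g.\ integrating a fixed strictly positive functional).

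I expect the main obstacle to be step (iv) together with the closedness claim (ii): unlike the eventually strongly monotone setting of \cite{CGW}, here the cone $X_+$ has empty interior, so I cannot separate comparable elements by interior points and must everywhere substitute the strong order-preserving dichotomy of Lemma \ref{lemma 3.1} plus the delicate uniform-stability continuation argument from the proof of Proposition \ref{theorem 3.3}. Concretely, the hard point is showing that along the segment joining two $1$-covers over $y$ the omega-limit set is a $1$-cover \emph{for every} intermediate point (not just an open dense set of them); this is exactly the `$L=[0,\tau_0]$ with $\tau_0=1$' step, and ruling out $\tau_0<1$ requires reproducing the contradiction at the end of Proposition \ref{theorem 3.3}. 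Once that continuation is in hand, total order, closedness, boundedness and order-connectedness combine to give the homeomorphism with a closed real interval, completing the proof.
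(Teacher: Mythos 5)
The paper does not actually prove this lemma: it is dispatched with a one-line reference to the proof of Theorem~3.1 in \cite{CGW}, so your attempt to write out an adapted argument goes beyond what the text supplies, and your choice of toolkit (Proposition~\ref{theorem 3.3} to make every omega-limit set a $1$-cover, the separation Lemma~\ref{lemma 3.1} as the substitute for eventual strong monotonicity, uniform stability for the continuation arguments) is exactly the right one for this setting. However, the central step --- total ordering --- has a genuine gap. You reduce to showing that if $q=\omega(c(y)\wedge\tilde c(y),y)$ satisfies $q(y)<c(y)$ and $q(y)<\tilde c(y)$ then the ``set $L$'' segment argument from $q(y)$ to $c(y)$ forces $c(\cdot)=q(\cdot)$. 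But that argument requires, as its very first step, that $\omega(x,y)=q(\cdot)$ for every $x\geqq q(y)$ with $\norm{x-q(y)}\leqq\delta_0$ on the segment. In Proposition~\ref{theorem 3.3} the analogous identification (3.6) is not a consequence of uniform stability alone: it is supplied by \cite[Proposition 3.1(3)]{JZH}, which applies precisely because there $p_\ast$ is the $1$-cover generated by the greatest lower bound of the ambient minimal set $K$ and $\omega(x,y)$ is squeezed strictly between $p_\ast$ and $K$. In your application the ambient minimal set is the $1$-cover $c(\cdot)$ itself, its greatest lower bound over $y$ is $c(y)$, and that proposition becomes vacuous. Worse, the claim $\omega(x,y)=q(\cdot)$ for all $x$ slightly above $q(y)$ is false in general: whenever $q$ is not isolated from above in the chain of $1$-covers --- which is the generic situation if $A(y)$ is a nondegenerate interval, the very conclusion you are proving --- a point $x>q(y)$ can have $\omega(x,y)$ equal to a different $1$-cover between $q$ and $c$. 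So the dichotomy ``$q=c$ or $q=\tilde c$'' is not established, and the total ordering of $A$ remains unproved.

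Two smaller points. In step (iv), showing that $\omega(x_\tau,y)\cap P^{-1}(y)\in A(y)$ for intermediate $x_\tau$ does not put $x_\tau$ itself in $A(y)$; what you actually need is that $\tau\mapsto\omega(x_\tau,y)\cap P^{-1}(y)$ is continuous (this does follow from the uniform modulus of stability) so that its image is a connected subchain joining the two endpoints --- be explicit, since this is where the interval structure really comes from. And the compactness of $A(y)$ needed for a homeomorphism with a \emph{closed} interval requires a uniform bound on the union of all $1$-covers, which does not follow fiberwise from (H4) without an additional argument; this should be addressed rather than folded into ``bounded by (H4) and the comparison principle.''
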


\begin{proof}
The proof is similar to that of Theorem 3.1 in \cite{CGW}.

\end{proof}

For any $(x_0,y_0)\in X_+\times Y$, denote the forward orbit and the
omega-limit set for $\Gamma_t$ by $O^+_{\Gamma}(x_0,y_0)$ and
$\omega_\Gamma(x_0,y_0)$, respectively. Now we will prove the
$1$-cover property for the uniformly stable $\omega$-limit sets of
the comparable skew-product semiflow $\Gamma_t$.

\vskip 3mm

\begin{proposition}\label{prop 4.3}
Assume that for point $(x_0,y_0)\in X_+\times Y$,
$O^+_{\Gamma}(x_0,y_0)$ is uniformly stable. Let
$\hat{K}=\omega_\Gamma(x_0,y_0)$. For any $y \in Y$, if there exists
some $ (b(y),y) \in A(y)$ such that $\hat{K} \bigcap P^{-1}(y) \geqq
(b(y),y)$, then $\hat{K}$ is a 1-cover of $Y$ for $\Gamma_t$.
\end{proposition}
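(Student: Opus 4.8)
The plan is to exploit the upper-comparability of $\Gamma_t$ with respect to $\Pi_t$ together with the totally ordered structure of $A$ from Lemma~\ref{lemma 4.1}, running an argument parallel to Proposition~\ref{theorem 3.3} but now for the comparable (nonmonotone) semiflow. First I would fix $y\in Y$ and, since $\hat K\cap P^{-1}(y)\geqq (b(y),y)$ with $(b(y),y)\in A(y)$, recall that $b(\cdot)$ is a $1$-cover for $\Pi_t$, so $\Pi_t(b(y),y)=(b(y\cdot t),y\cdot t)$ for all $t$. Because $\Gamma_t$ is upper-comparable with $\Pi_t$, for any $(x,y)$ with $x\geqq b(y)$ we get $\Gamma_t(x,y)\leqq \Pi_t(x,y)$; in particular every forward orbit of $\Gamma_t$ starting above $b(y)$ stays trapped underneath the corresponding $\Pi_t$-orbit, hence remains $L^\infty$-bounded and precompact. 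Combined with the hypothesis that $O^+_\Gamma(x_0,y_0)$ is uniformly stable, Theorem~\ref{theorem 3.0} applies: $\hat K=\omega_\Gamma(x_0,y_0)$ admits a minimal flow extension.

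Next I would introduce, for each $y\in Y$, the greatest lower bound $(\hat p(y),y)=\text{g.l.b. of }\hat K\cap P^{-1}(y)$, which exists because $\hat K\cap P^{-1}(y)$ is compact. The key claim is that $\hat p(\cdot)$ is itself a $1$-cover for $\Pi_t$, i.e. $\hat p(y)=p_\ast(y)$ in the notation used above with $p_\ast$ attached to the starting fiber; the point is that $\hat K$ lies between the $1$-cover $b(\cdot)$ and the $\Pi_t$-orbit through $\hat K$, and by Proposition~\ref{theorem 3.3} the latter's $\omega$-limit is a $1$-cover, so by the sandwiching and Lemma~\ref{lemma 4.1} (total order of $A$) the g.l.b.\ fiber of $\hat K$ must coincide with a $1$-cover of $\Pi_t$. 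Once $\hat p(\cdot)\in A$, I would argue by contradiction: suppose there is $y$ and $(z,y)\in\hat K$ with $\hat p(y)<z$. Using minimality of $\hat K$ one upgrades this (as in Proposition~\ref{theorem 3.3}) to $\hat p(y)<z$ for \emph{all} $(z,y)\in\hat K\cap P^{-1}(y)$, and then the strong order-preserving property of $\Pi_t$ (applied to comparisons $\Gamma_t(x,y)\leqq\Pi_t(x,y)$ with endpoints in $A$) yields an $\epsilon_0>0$ with $B^+(\hat p(y),\epsilon_0)<\hat K\cap P^{-1}(y)$; invoking the modulus of uniform stability of the $\Gamma_t$-orbit and the connectedness of the segment $x_\tau=\hat p(y)+\tau(z-\hat p(y))$, $\tau\in[0,1]$, one shows the set $L=\{\tau:\omega_\Gamma(x_\tau,y)\text{ is the }1\text{-cover }\hat p(\cdot)\}$ is a nonempty closed interval $[0,\tau_0]$ with $\tau_0<1$, and the same limiting argument with $\{t_n\}$ driving $\Pi_{t_n}(x_{\tau_0},y)\to(\hat p(y),y)$ produces a $\tilde x\in B^+(\hat p(y),\epsilon_0)$ whose $\Gamma_t$-$\omega$-limit is $\hat p(\cdot)$, contradicting the definition of $\tau_0$. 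Hence $\hat K\cap P^{-1}(y)=\{(\hat p(y),y)\}$ for every $y$, and minimality forces $\hat K$ to be a $1$-cover for $\Gamma_t$.

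The main obstacle I expect is the bookkeeping needed to make the comparison arguments valid for the \emph{nonmonotone} $\Gamma_t$: the strong order-preserving estimates (Lemma~\ref{lemma 3.1} analogue, the finite-subcover step giving $B^+(\hat p(y),\epsilon_0)<\hat K\cap P^{-1}(y)$, and the limit passage to $\tilde x$) are all statements about $\Pi_t$, so one must carefully interleave the identities $\Pi_t(x,y)\geqq\Gamma_t(x,y)$ with the $1$-cover identity $\Pi_t(b(y),y)=(b(y\cdot t),y\cdot t)$ and the fact that $\omega_\Pi$ of anything above $b(y)$ is a $1$-cover lying in the totally ordered set $A$. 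In effect one is transporting the uniform-stability/strong-monotonicity machinery of Proposition~\ref{theorem 3.3} across the comparison, and the delicate part is verifying that every intermediate set one orders actually sits inside $A(y)$ (so that Lemma~\ref{lemma 4.1} can be used) and that the uniform stability constants pass through the comparison unchanged.
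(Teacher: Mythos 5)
Your proposal hinges on the claim that the fiberwise greatest lower bound $\hat p(y)=\mathrm{g.l.b.}\ \hat K\cap P^{-1}(y)$ is itself a $1$-cover for $\Pi_t$ lying in $A$, and you justify this only by a ``sandwiching'' heuristic. This is the gap: $\hat K$ is invariant under the \emph{nonmonotone} semiflow $\Gamma_t$, not under $\Pi_t$, so the machinery behind Proposition~\ref{theorem 3.3} (which rests on the monotonicity estimate $u(t,p(y),y)\leqq u(t,z,y)$ for all $(z,y)\in K\cap P^{-1}(y)$, hence $u(t,p(y),y)\leqq p(y\cdot t)$) simply does not apply to $\hat p$. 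Knowing that $\hat K$ sits between $b(\cdot)\in A$ and some $\Pi_t$-orbit does not place its fiberwise infimum inside the totally ordered set $A$, and without that your appeal to Lemma~\ref{lemma 4.1} collapses. A second, independent problem is your segment argument with $x_\tau=\hat p(y)+\tau(z-\hat p(y))$ and $\omega_\Gamma(x_\tau,y)$: the closedness of $L$ and the limit passage to $\tilde x$ require uniform stability of the $\Gamma_t$-orbits through the intermediate points $x_\tau$, but the hypotheses grant uniform stability only for the single orbit $O^+_\Gamma(x_0,y_0)$ (the blanket uniform-stability assumption is made for $\Pi_t$ only). Likewise, the step producing $B^+(\hat p(y),\epsilon_0)<\hat K\cap P^{-1}(y)$ imitates Lemma~\ref{lemma 3.1}, whose proof uses the $\Pi_t$-minimality of $K$ and the identity $u(t,p_\ast(y),y)=p_\ast(y\cdot t)$ --- neither available for $\hat K$ and $\hat p$.

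The paper's proof avoids all of this by squeezing $\hat K$ \emph{from above} rather than from below: it forms $\mathcal{C}=\{c(\cdot)\in C_\Pi: (c(y),y)\geqq \hat K\cap P^{-1}(y)\ \forall y\}$, takes $q(\cdot)=\inf\mathcal{C}\in\mathcal{C}$ (this is where Lemma~\ref{lemma 4.1} is genuinely used), and assumes some fiber of $\hat K$ lies strictly below $q$. The strongly order-preserving property of $\Pi_t$ and the path in $A(y_1)$ from $b(y_1)$ to $q(y_1)$ then produce a $1$-cover $q_1(\cdot)<q(\cdot)$ and an open separation $(U_0,y_2)>(V_0,y_2)$ around $(q_1(y_2),\bar c)$; the decisive move --- entirely absent from your sketch --- is to pull this separation back to the initial data, obtaining $\Pi_{t_N}(q_1(y_0),y_0)>\Gamma_{t_N}(x_0,y_0)$, and then to push it forward by monotonicity of $\Pi_t$ plus upper-comparability to conclude $(q_1(y),y)\geqq\hat K\cap P^{-1}(y)$ for all $y$, contradicting the minimality of $q$. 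You would need to replace your $\hat p$-based argument with this (or an equivalent) mechanism for the proof to go through.
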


\begin{proof}

Let $C_\Pi=\{c(\cdot): c(\cdot) \textnormal{ is a $1$-cover for
}\Pi_t\}.$ Then by a similar argument in the proof of \cite[Theorem
4.3]{CGW}, using Lemma \ref{lemma 4.1} we can define a nonempty
totally ordered set $\mathcal{C}\subset C_\Pi$, for which
\begin{equation*}
\mathcal{C}=\{c(\cdot)\in C_\Pi :(c(y),y) \geqq \hat{K}\cap
P^{-1}(y) \,\mbox{ for all } \,y\in Y\},
\end{equation*}  and the greatest lower bound $\inf\mathcal{C}\in \mathcal{C}$ exists.

Denote $q(\cdot)=\inf\mathcal{C}$. Now we assert that $\hat{K}$ is a
1-cover of $Y$ for $\Gamma_t$, satisfying
\begin{equation*} \hat{K} \bigcap P^{-1}(y) = (q(y),y),~\forall
y \in Y.\end{equation*} Otherwise, there exist a $y_1 \in Y$ and
some $(c,y_1) \in \hat{K}\bigcap P^{-1}(y_1)$ such that
\begin{equation*}
(q(y_1),y_1) > (c,y_1).
\end{equation*}
According to our assumption, we have
$$(q(y_1),y_1) > (c,y_1) \geqq (b(y_1),y_1). $$
Then by \cite[Lemma 3.4]{CGW}, there is a strictly order-preserving
continuous path
\begin{equation}\label{4.1}
J: [0,1] \rightarrow A(y_1) \mbox{ with }J(0) = (b(y_1),y_1)\mbox{
and }J(1) = (q(y_1),y_1).\end{equation} Since $(q(y_1),y_1) >
(c,y_1)$, by the strongly order-preserving property of $\Pi_t$ and
the comparability of $\Gamma_t$ with respect to $\Pi_t$, we have
that there exists a neighborhood $U$ of $q(y_1)$ such that
\begin{equation*}
\Pi_{t_1}(U,y_1) > \Pi_{t_1}(c,y_1) \geqq
\Gamma_{t_1}(c,y_1)=(v(t_1,c,y_1),y_1\cdot t_1)
\end{equation*} for some $t_1 > t_0$.
Denote $\bar{c}=v(t_1,c,y_1)$ and $y_2=y_1\cdot t_1$. Then
$(\bar{c},y_2)\in \hat{K}$ and
\begin{equation}\label{4.2}
(u(t_1,U,y_1),y_2)>(\bar{c},y_2).
\end{equation}
Note that $U$ is a neighborhood of $q(y_1)$. Then due to (\ref{4.1})
we can find a point $q_1(y_1) \in U\bigcap A(y_1)$ with
$q_1(y_1)<q(y_1)$. Thus, by (\ref{4.2}) we obtain
\begin{equation*}
(q(y_2),y_2) > (q_1(y_2),y_2) > (\bar{c},y_2).
\end{equation*}

Since $O^+_{\Gamma}(x_0,y_0)$ is uniformly stable, by Theorem
\ref{theorem 3.0} $\hat{K}$ admits a flow extension which is
minimal. Thus for any $t \in \mathbb{R}$, there is $t_n \to +\infty$
such that $t_n + t \geqq 0$ and
$$\Gamma_{t_n} \circ \Gamma_t(\bar{c},y_2) \to \Gamma_t(\bar{c},y_2),~n \to \infty.$$
Then the monotonicity and the comparability of $\Gamma_t$ with
respect to $\Pi_t$ imply that
$$\Pi_{t_n} \circ \Pi_t(q_1(y_2),y_2) \geqq \Pi_{t_n} \circ \Pi_t(\bar{c},y_2)\geqq \Gamma_{t_n} \circ \Gamma_t(\bar{c},y_2).$$
By letting $n \to \infty$ in the above, we get $\Pi_t(q_1(y_2),y_2)
\geqq \Gamma_t(\bar{c},y_2)$, thus,
\begin{equation}\label{4.3}
u(t,q_1(y_2),y_2) \geqq v(t,\bar{c},y_2),~\forall t \in \mathbb{R}.
\end{equation}
Note that $O^+_{\Pi}(q_1(y_2),y_2)$ is uniformly stable, by Theorem
\ref{theorem 3.0} we get that
\begin{equation}\label{4.4}
u(t,q_1(y),y)=q_1(y\cdot t)~\mbox{ for any }y\in Y \mbox{ and }
t\in\mathbb{R}.
\end{equation}
So combining (\ref{4.3}), (\ref{4.4}) and the comparability of
$\Gamma_t$ with respect to $\Pi_t$, similarly as the proof of Lemma
\ref{lemma 3.1}, we can get that for any $t \in \mathbb{R}$, there
exist a neighborhood $U_t$ of $q_1(y_2)$ and a neighborhood $V_t$ of
$\bar{c}$ such that
\begin{equation*}u(t,U_t,y_2) > v(t,V_t,y_2).\end{equation*}
In particular, for $t = 0$, there exist a neighborhood $U_0$ of
$q_1(y_2)$ and a neighborhood $V_0$ of $\bar{c}$ such that
\begin{equation}\label{4.5}
(U_0,y_2) > (V_0,y_2).
\end{equation}

Recall that $\hat{K}$ is the omega-limit set of $(x_0,y_0)$ for
$\Gamma_t$, there exists some sequence $t_n \to +\infty$ such that
$\Gamma_{t_n}(x_0,y_0) \to (\bar{c},y_2) \in \hat{K}$, as $n \to
\infty$. Also, since $q_1(\cdot)$ is a 1-cover for $\Pi_t$, we get
$\Pi_{t_n}(q_1(y_0),y_0) \to (q_1(y_2),y_2)$, as $n \to \infty$. So
by (\ref{4.5}) there exists $N > 1$ such that
\begin{equation}\label{20}
\Pi_{t_N}(q_1(y_0),y_0) > \Gamma_{t_N}(x_0,y_0).\end{equation} Then
by a similar argument in the proof of \cite[Theorem 4.3]{CGW}, we
can get that
\begin{equation*} (q_1(y),y) \geqq \hat{K} \bigcap
P^{-1}(y) ~~\mbox{ for all }y \in Y.
\end{equation*}
For the sake of completeness, we include a detailed proof here. As a
matter of fact, by the monotonicity of $\Pi_t$ and the comparability
of $\Gamma_t$ with respect to $\Pi_t$, it follows from (\ref{20})
that
\begin{equation}\label{4.6}
\Pi_{t+t_N}(q_1(y_0),y_0) \geqq \Pi_t\Gamma_{t_N}(x_0,y_0) \geqq
\Gamma_{t+t_N}(x_0,y_0),~\forall t \geqq 0.
\end{equation}
For any $(x,y) \in \hat{K}$, there exists $s_n \to +\infty$ such
that $\Gamma_{s_n}(x_0,y_0) \to (x,y)$, as $n \to \infty$. Let $t =
s_n - t_N$ in (\ref{4.6}) for all $n$ sufficiently large. Then we
get $\Pi_{s_n}(q_1(y_0),y_0) \geqq \Gamma_{s_n}(x_0,y_0)$. Letting
$n \to +\infty$, one has $(q_1(y),y) \leqq (x,y)$. By the
arbitrariness of $(x,y) \in \hat{K}$, we get $(q_1(y),y) \geqq
\hat{K} \bigcap P^{-1}(y)$ for all $y \in Y$. This contradicts the
definition of $q(\cdot)$. So we have proved the assertion, and
$\hat{K}$ is a 1-cover of $Y$ for $\Gamma_t$.

\end{proof}

\begin{proof}[{\bf Proof of Theorem \ref{theorem 2.1}}]
Let $v(t,\cdot,v_0;d,F)$ be an $L^\infty$-bounded solution of
\eqref{IBVP-sys} in $X_+$. Then from the study in \cite{Hen} and a
priori estimates for parabolic equations, it follows that $v$ is a
globally defined classical solution in $X_+$, and
$\{v(t,\cdot,v_0;d,F):t\geqq \tau\}$ is precompact in $X_+$ for some
$\tau>0$. So $\hat{K}:=\omega_\Gamma(v_0,(d,F))$ is a nonempty
compact set in $X_+\times H(d,F)$. Since $0(\cdot)\in C_\Pi$ by
(H2), $$\hat{K}\cap P^{-1}(y)\geqq (0,y)\in A(y),~~ \forall y\in
Y.$$  If $v(t,\cdot,v_0;d,F)$ is uniformly stable, then by
Proposition \ref{prop 4.3} we get that $\hat{K}$ is a $1$-cover of
$\Omega$ for $\Gamma_t$, and thus the uniformly stable
$L^\infty$-bounded solution $v(t,\cdot,v_0;d,F)$ is asymptotic to an
almost periodic solution.
\end{proof}

\bibliographystyle{amsplain}

\end{document}